\DeclareMathOperator{\curl}{curl}
\DeclareMathOperator{\dv}{div}
\date{\today}
\def\subjclass#1{{\renewcommand{\thefootnote}{}%
\footnote{\emph{Mathematics Subject Classification (2010):} #1}}}
\theoremstyle{plain}
\newtheorem{Thm}{Theorem}
\newtheorem{Def}[Thm]{Definition}
\newcommand {\p}{\partial}
\newcommand{\q}{\quad}
\def\O{\Omega}
\def\A{\mathbf A}
\def\B{\mathbf B}
\def\u{\mathbf u}
\def\v{\vskip}
\numberwithin{equation}{section}
\numberwithin{Thm}{section}
\begin{document}
\large
%Topmatter

\title{Global boundedness of the curl for a $p$-curl
 system in convex domains}

\author[]{Hongjin Wu, Baojun Bian}

\address{School of Mathematical Sciences,
Tongji University, Shanghai 200092, P.R. China}
\email{wuhongjin@tongji.edu.cn}

\thanks{ }

\keywords{$p$-curl systems, convex domains, $L^{\infty}$ estimate}

\subjclass{26D10; 46E40; 35Q61; 82D55}
\begin{abstract}
In this paper, we study a semilinear system involving the curl operator in a bounded and convex domain in
$\mathbb{R}^3$, which comes from the steady-state approximation for Bean  critical-state model for type-$\mathrm{II}$  superconductors. We show the existence and the $L^{\infty}$ estimate for weak solutions to this system.
\end{abstract}
\maketitle

\section{Introduction}\label{section1}
This paper is devoted to the study of the boundedness of the curl of solutions to a semilinear system in a bounded and convex domain $\O$ in $\mathbb{R}^3$:
\begin{equation}\label{1.1}
\begin{cases}
\curl\left(|\curl\u|^{p-2}\curl\u\right)={\bm f} &\text{\rm in } \O,\\
\dv\u=0 &\text{\rm in } \O,\\
\u\times \bm\nu =0 &\text{\rm on } \p \O,
\end{cases}
\end{equation}
where $\bm\nu$ denotes the outward unit normal to $\p \O$, $p\in (1,\infty)$ is fixed, $\mathbf{u}: \O \to \mathbb{R}^3$ is a vector-valued unknown function and ${\bm f} :\O \to \mathbb{R}^3$ is a given and divergence-free vector.

The system \eqref{1.1} appears as a model of the magnetic induction in a high-temperature superconductor operating near its critical current (see \cite{Yin2006Regularity, Laforest}). More precisely, $\u$ and $\curl\u$ are the magnetic field and the total current density respectively, and ${\bm f}$ denotes the internal magnetic current. The system \eqref{1.1} is also the steady-state approximation for Bean's critical-state model for type-$\mathrm{II}$ superconductors (see \cite{Chapman2000A,Yin2001,Yin2002}), where the magnetic field $\u$ is approximated by the solution of the $p$-curl-evolution system. For the detailed physical background, we refer to \cite{Bean1964Magnetization, Chapman2000A, DeGennes}.

We need to mention that the Cauchy problem for the $p$-curl-evolution system in unbounded domains can be found in \cite{Yin2001} by Yin, who studied the existence, uniqueness, and regularity of solutions to this system. In \cite{Yin2002}, Yin considered the same problem of the $p$-curl-evolution system in $\Omega\times(0,T]$, where $\Omega$ is a bounded domain in $\mathbf{R}^3$ with $C^{1,1}$ boundary and no holes in the interior. The results for domains in $\mathbf{R}^2$ had been established in \cite{Barrett}. Recently, the work of \cite{Yin2002} had been extended to more general resistivity term of the form $\rho=g(x,|\curl\mathbf{H}|)$ for some function $g$ by Aramaki (see \cite{Aramaki}). We refer to \cite{Antontsev} and \cite{FAntontsev} for more results of the parabolic version of the system \eqref{1.1}. Some recent results with respect to this kind of problems can be found in \cite{Bartsch, Mederski,Tang}.

Regardless of the time dimension, Laforest \cite{Laforest} studied the existence and uniqueness of the weak solution to the steady-state $p$-curl system \eqref{1.1} in a bounded domain whose boundary is sufficiently smooth. Yin\cite{Yin2006Regularity} studied the regularity of the weak solution to the system \eqref{1.1} in a bounded and simply-connected domain $\Omega$ with $\p\O\in C^2$ and showed that the optimal regularity weak solutions is of class $C^{1+\alpha}$, where $\alpha\in(0,1)$. For such problems mentioned above, one usually works in spaces of divergence-free vector fields. In \cite{ChenJun}, Chen and Pan considered the existence of solutions to a quasilinear degenerate elliptic system with lower-order terms in a bounded domain under the Dirichlet boundary condition or the Neumann boundary
condition, and the divergence-free subspaces are no longer suitable as admissible spaces. We also mention that further generalizations of $p$-curl systems with $p$ instead of $p(x)$ are the subject of \cite{FAntontsev, XiangM} and the references therein.

Moreover, system \eqref{1.1} is a natural generalization of the $p$-Laplacian system for which a well-developed theory already exists (see \cite{Cianchi2014Global, Barrett1993Finite, Dibenedetto2010Degenerate, Yin1998On, Uralceva}). In particular, the boundedness of the gradient up to the boundary for solutions to Dirichlet and Neumann problems for  divergence form  elliptic systems with Uhlenbeck type structure was established by Cianchi and Maz'ya in \cite{Cianchi2014Global}, where the $p$-Laplacian system is also included. Note that the system \eqref{1.1} is a curl-type elliptic system with Uhlenbeck type structure. In parallel with the problem for $p$-Laplacian systems in \cite{Cianchi2014Global}, our study is aimed at checking whether we have the boundedness of the curl of solutions to the system \eqref{1.1}. The elliptic characteristic of system \eqref{1.1} can be inferred by this result and the system thus can satisfy the properties of elliptic partial differential equations. But unlike the Dirichlet boundary condition or the Neumann boundary condition for the divergence-type system in \cite{Cianchi2014Global}, the difficulties of our work is to tackle the vanished tangential components and the degeneration of the operator $curlcurl$ in system \eqref{1.1}. 

To state our result, we first need to introduce Sobolev spaces $W_t^p(\O,\dv 0)$ with $1<p<\infty$
and Lorentz spaces $L(m,p)$ with $1\leq m<\infty$ and $1\leq p\leq\infty$.
Define by
$$
W_t^p(\O,\dv 0):=\left\{\u, \curl\u\in L^{p}(\O)~:~ \dv\u=0 \text{ in } \O
\text{ and }\bm\nu\times\u=0 \text{ on }\p\O\right\}
$$
with the norm
$$
\|\u\|_{W_t^p(\O,\dv 0)} = \|\u\|_{L^p(\O)}+\|\curl\u\|_{L^p(\O)}.
$$

Let $(X,S,\mu)$ be a $\sigma-$finite measure space and $f:X\to\mathbb{R}$ be
a measurable function. We define the distribution function of $f$ as
$$f_{*}(s)=\mu(\{|f|>s\}),\q s>0,
$$
and the non-increasing rearrangement of $f$ as
\begin{equation}\label{7.2}
f^{*}(t)=\inf\{s>0, f_{*}(s)\leq t\},\q t>0.
\end{equation}
The Lorentz space now is defined by
$$L(m,p)=\left\{f: X\to\mathbb{R} \text{ measurable},
\|f\|_{L^{m,p}}<\infty\right\} \q \text{with } 1\leq m<\infty
$$
equipped with the quasi-norm
$$\|f\|_{L(m,p)}=\Big(\int_0^{\infty}\left(t^{1/m}f^{*}(t)\right)^p\frac{dt}{t}\Big)^{1/p},
\q 1\leq p<\infty
$$
and
$$\|f\|_{L(m,\infty)}=\sup_{t>0} t^{1/m} f^{*}(t),\q p=\infty.
$$

Then we introduce the weak solution to the system \eqref{1.1}:
\begin{Def}
We say that $\u\in W_t^p(\O,\dv 0)$ is a (weak) solution to the system \eqref{1.1}
if
$$
\int_{\O} |\curl\u|^{p-2}\curl\u\cdot\curl\mathbf\Phi dx=\int_{\O} {\bm f}\cdot\mathbf\Phi dx
$$
for any $\mathbf\Phi\in W_t^p(\O,\dv 0).$
\end{Def}

Our result now reads as follows:
\begin{Thm}\label{Theorem1.1}
Let $\O$ be a bounded convex domain in $\mathbb{R}^3$.
Assume that ${\bm f} \in L^{3,1}(\O)$ with $\dv{\bm f}=0$ in the sense of distribution, then there exists a unique (weak) solution $\u\in W_t^p(\O,\dv 0)$ to the system \eqref{1.1}. Moreover, we have the estimate
\begin{equation}\label{1.2}
\lVert\curl\u\rVert _{L^{\infty}(\O)}\leq C \|{\bm f}\|_{L^{3,1}(\O)}^{\frac{1}{p-1}},
\end{equation}
where the constant $C$ depends on $p$ and $\O.$
\end{Thm}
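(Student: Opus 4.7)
The argument splits into existence/uniqueness and the global $L^\infty$ bound on $\curl\u$, the latter being the principal contribution. For existence and uniqueness I would use the direct method of the calculus of variations: on the reflexive Banach space $W_t^p(\O,\dv 0)$ consider the strictly convex functional
$$ J(\u) = \frac{1}{p}\int_\O |\curl \u|^p\,dx - \int_\O {\bm f}\cdot\u\,dx. $$
Since $\O$ is convex (hence simply connected), the classical Poincaré-type inequality $\|\u\|_{L^p(\O)}\leq C\|\curl\u\|_{L^p(\O)}$ holds on $W_t^p(\O,\dv 0)$, giving coercivity; continuity of the linear term follows from the embedding $L^{3,1}(\O)\hookrightarrow L^{p'}(\O)$ composed with the Sobolev embedding of $W_t^p$. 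Weak lower semicontinuity and strict convexity then yield a unique minimizer whose Euler-Lagrange equation is precisely the stated weak formulation.

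For the $L^\infty$ estimate I would first regularize to remove the degeneracy, solving
$$ \curl\bigl((|\curl\u_\var|^2+\var)^{(p-2)/2}\curl\u_\var\bigr)={\bm f}_\var\qquad\text{in }\O, $$
with the same tangential and divergence conditions and smooth ${\bm f}_\var\to{\bm f}$ in $L^{3,1}(\O)$. The regularized operator is non-degenerate, so the regularity theory of Yin \cite{Yin2006Regularity} produces smooth solutions $\u_\var$, and a bound on $\|\curl\u_\var\|_{L^\infty(\O)}$ uniform in $\var$ will pass to the limit. Setting $\bm\s_\var=(|\curl\u_\var|^2+\var)^{(p-2)/2}\curl\u_\var$, one has $\curl\bm\s_\var={\bm f}_\var$ and $|\curl\u_\var|^{p-1}\sim|\bm\s_\var|$, so it suffices to prove the linear estimate $\|\bm\s_\var\|_{L^\infty(\O)}\leq C\|{\bm f}\|_{L^{3,1}(\O)}$.

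The main estimate would be obtained by a level-set / rearrangement argument in the spirit of Talenti symmetrization and the Cianchi-Maz'ya \cite{Cianchi2014Global} proof for the $p$-Laplacian system. For each $t>0$ I would take a suitable truncation of $\bm\s_\var$ above level $t$ as a test function, integrate by parts on the super-level set $\{|\bm\s_\var|>t\}$, and combine the resulting Bochner-type identity for $|\curl\u_\var|^2$ with the boundary integral. The condition $\u_\var\times\bm\nu=0$ forces $\curl\u_\var$ to be tangential on $\p\O$, so the boundary term can be written in terms of the second fundamental form of $\p\O$; convexity of $\O$ gives it a favourable sign and it is discarded. What remains is an isoperimetric-type inequality bounding the distribution function $\mu_\var(t)=|\{|\bm\s_\var|>t\}|$ and the quantity $-\frac{d}{dt}\int_{\{|\bm\s_\var|>t\}}|\bm\s_\var|\,dx$ by an $L^1$-average of $|{\bm f}_\var|$ over the same super-level set. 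Integrating the resulting differential inequality for $\bm\s_\var^{*}$ and invoking the layer-cake formula for the Lorentz norm then produces exactly $\|\bm\s_\var\|_{L^\infty}\leq C\|{\bm f}\|_{L^{3,1}}$, hence \eqref{1.2}.

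The central obstacle, and the reason convexity is essential, is the boundary analysis: unlike the Dirichlet or Neumann settings of \cite{Cianchi2014Global}, the tangential condition $\u\times\bm\nu=0$ supplies only partial trace information, so the second-order identity for $|\curl\u|^2$ produces a boundary term that cannot be discarded by routine manipulations. The only available mechanism is to recast it as a quadratic form against the second fundamental form of $\p\O$, whose sign is fixed by convexity. Performing this reshaping at the level of the (generally non-smooth) super-level sets of $|\bm\s_\var|$, uniformly in $\var$, and threading the outcome through the Lorentz-rearrangement machinery, is the technical heart of the theorem.
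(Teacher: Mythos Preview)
Your strategy coincides with the paper's at the conceptual level: existence via the direct method, then a level-set/rearrangement argument on super-level sets of $|\curl\u|$ in which the boundary contribution collapses to the second fundamental form applied to the tangential field $\bm\omega_\tau=\curl\u|_{\partial\Omega}$ and is discarded by convexity. The paper, however, does not introduce an $\varepsilon$-regularized operator. It first assumes $\partial\Omega\in C^\infty$ and ${\bm f}\in C^\infty(\overline\Omega)$, invokes Yin's $C^{1,\alpha}$ theory, and observes that the solution is then $C^3$ on the closure of $\{|\curl\u|>t\}$ for $t\ge|\bm\omega|^*(|\Omega|/2)$, which suffices to justify all integrations by parts directly on $\bm\omega=\curl\u$. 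The core inequality \eqref{1.5} is obtained by dotting the equation with $\curl\bm\omega$ (not by testing with a truncation of $\bm\sigma$), the cases $p\ge 2$ and $p<2$ are handled by different choices of multiplier, and the Lorentz-norm step quotes ready-made estimates from \cite{xiangxf}. Two separate approximation steps then remove the smoothness hypotheses: first on $\partial\Omega$ via an outer exhaustion by smooth convex domains $\Omega_m\supset\Omega$ (a step you omit, and it is needed since a generic convex domain is only Lipschitz), and second on ${\bm f}$ by a divergence-free mollification.

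One caveat on your reduction: the phrasing ``it suffices to prove the linear estimate $\|\bm\sigma_\varepsilon\|_{L^\infty}\le C\|{\bm f}\|_{L^{3,1}}$'' is misleading, because $\curl\bm\sigma_\varepsilon={\bm f}_\varepsilon$ together with $\bm\sigma_\varepsilon\cdot\bm\nu=0$ does not determine $\bm\sigma_\varepsilon$ --- its divergence is uncontrolled --- so no genuinely linear argument is available. The Bochner-type identity you invoke afterwards is inherently nonlinear (it is exactly the paper's identity \eqref{1.7}) and really lives on $\bm\omega$, not on $\bm\sigma$. With that reframing and the added domain-approximation step, your outline matches the paper's proof.
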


The proof of Theorem \ref{Theorem1.1} will be given in Section 2. Throughout this paper, a bold letter represents a three-dimensional vector or vector function.

\section{proof of the main result}\label{section2}

We now give the proof of our main theorem.

\begin{proof}[Proof of Theorem \ref{Theorem1.1}]
Note that the system \eqref{1.1} is the Euler equation
 of the strictly convex
functional
$$
J(\u)=\int_{\O}\left(\frac{1}{p}|\curl\u|^p -{\bm f}\cdot\u \right)dx
$$
in the space $W^p_t(\O, \dv 0).$ It is easy to see that the functional
is of weak lower semi-continuity and coercivity,
then the existence and the uniqueness of the
weak solution to the system \eqref{1.1} follows,
see \cite[Theorem 1.6]{Str}.
Since $
J(0)\leq 0,
$  then  we  have the estimate
$$
\frac{1}{p}\int_{\O}|\curl\u|^p dx\leq \|{\bm f}\|_{L^{3,1}(\O)}\|\u\|_{L^{3/2,\infty}(\O)}.
$$
The space $W^p_t(\O, \dv 0)$
is continuously embedded into the space $L^{3/2, \infty}(\O),$ we can obtain that
\begin{equation}\label{7.3}
\|\curl\u\|_{L^p(\O)}\leq C(p,\O) \|{\bm f}\|_{L^{3,1}(\O)}^{\frac{1}{p-1}}.
\end{equation}
Then, we show the inequality \eqref{1.2}. The proof is divided into three steps.

Step 1.
First, we make the following assumptions:
\begin{equation}\label{2.3}
\p\O\in C^{\infty},
\end{equation}
and
\begin{equation}\label{2.4}
{\bm f}\in C^{\infty}(\bar{\O}).
\end{equation}

From the main theorem in \cite{Yin1998On}, we see that $\u\in C^{1,\alpha}(\bar{\O})$ for some $\alpha \in (0,1).$
Then classical regularity results can show that the solution
$$\u \in C^3\left(\bar{\O}\bigcap \overline{\{|{\curl\u}|>t\}}\right),$$
for $t\geq|\bm{\omega}|^*(|\Omega|/2),$ where
$|\bm{\omega}|^*$ is defined by \eqref{7.2},

For simplicity, we now rewrite $\bm{\omega}$, $G(|\bm{\omega}|)$ instead of $\curl\u$, $|\curl\u|^{p-2}$, respectively.
We first establish the following inequality,
\begin{equation}\label{1.5}
\aligned
(p-1)t^{p-1}(\int_{\{|\bm{\omega}|=t\}}|\nabla|\bm\omega||dS)\leq &
\int_{\{|\bm{\omega}|>t\}} \frac{1}{4(p-1)}\frac{1}{|\bm\omega|^{p-2}}|{\bm f}|^2 dx
+t\int_{\{|\bm{\omega}|=t\}}|{\bm f}| dS\\
&+\int_{\p\O\bigcap \p\{|\bm{\omega}|>t\}} G(|\bm{\omega}|)
\sum_{i,j=1}^3 \nu_i \omega_j \p_j \omega_i dS.
\endaligned
\end{equation}

First, by applying the formula $\dv(\A\times\B)=\curl\A\cdot\B-\A\cdot\curl\B$ to the system \eqref{1.1}, we see that
\begin{equation}\label{1.6}
-{\bm f}\curl\bm\omega=\dv(G(|\bm\omega|)\bm\omega\times\curl\bm\omega)+G(|\bm\omega|)\bm\omega\cdot\curl\curl\bm\omega.
\end{equation}
By simple computations, it follows that
$$
\aligned
G(|\bm\omega|)\bm\omega\cdot&\curl\curl\bm\omega
=-\dv(G(|\bm\omega|)\nabla|\bm\omega||\bm\omega|)
+G(|\bm\omega|)|\nabla|\bm\omega||^2
+G'(|\bm\omega|)|\nabla|\bm\omega||^2|\bm\omega|.
\endaligned
$$
Substituting the above equality to \eqref{1.6}, we have
\begin{equation}\label{1.7}
\aligned
-{\bm f}\curl\bm\omega =\dv(G(|\bm\omega|)\bm\omega\times\curl\bm\omega)
-\dv(G(|\bm\omega|)\nabla|\bm\omega||\bm\omega|)+(p-1)|\bm\omega|^{p-2}|\nabla|\bm\omega||^2.
\endaligned
\end{equation}
By Cauchy inequality, one has that
\begin{equation}\label{1.8}
{\bm f}\curl\bm\omega\geq -\frac{|{\bm f}|^2}{4(p-1)|\bm\omega|^{p-2}}-(p-1)|\bm\omega|^{p-2}|\curl\bm\omega|^2.
\end{equation}
Coupling \eqref{1.7} with \eqref{1.8} tells us that
\begin{displaymath}
\aligned
&-\dv(G(|\bm\omega|)\bm\omega\times\curl\bm\omega)+\dv(G(|\bm\omega|)\nabla|\bm\omega||\bm\omega|)\\
&\geq-\frac{1}{4(p-1)}\frac{1}{|\bm\omega|^{p-2}}|{\bm f}|^2.
\endaligned
\end{displaymath}
Integrating the above inequality over the region ${\{|\bm\omega|>t\}}$ and by Green's formula, we have
\begin{equation}\label{1.9}
\aligned
&\int_{\p\{|\bm{\omega}|>t\}}
\left[G(|\bm{\omega}|)\curl \bm{\omega}\times \bm{\omega} +G(|\bm{\omega}|)\nabla|\bm{\omega}||\bm{\omega}|\right]\cdot\bm\nu dS\\
&\geq
-\int_{\{|\bm{\omega}|>t\}}\frac{1}{4(p-1)}\frac{1}{|\bm\omega|^{p-2}}|{\bm f}|^2 dx,
\endaligned
\end{equation}
where $\bm\nu(x)$ represents the unit outer normal vector at
$x\in \p\{|\bm{\omega}|>t\}.$

For almost every $t>0$, The level surface $\p\left\{|\nabla\bm\omega|>t\right\}$ has
$$
\p\{|\bm{\omega}|>t\}=\p\O\bigcap\p\{|\bm{\omega}|>t\} +\{|\bm{\omega}|=t\}.
$$
Also, for $x\in \{|\bm{\omega}|=t\}\bigcap\{|\nabla |\bm{\omega}||\neq 0\}$ we have
\begin{equation}\label{1.10}
\bm\nu(x)=-\frac{\nabla |\bm{\omega}|}{|\nabla |\bm{\omega}||}.
\end{equation}
From Sard's theorem, we know that
\begin{center}
the image $|\bm{\omega}|(X)$ has Lebesgue measure $0$, where $X=\{|\nabla |\bm{\omega}||=0\}$.
\end{center}

Let us focus on the terms in the left-hand side of \eqref{1.9}. Since, for every $x \in \p\O$,
$$\left(\curl \bm{\omega}\times \bm{\omega} +\nabla|\bm{\omega}||\bm{\omega}|\right)\cdot\bm\nu(x)
=\sum_{i,j=1}^3\nu_i \omega_j \p_j \omega_i.$$
Combining \eqref{1.1} and \eqref{1.10}, and making use of the above equality yields
\begin{equation}\label{1.11}
\aligned
&\int_{\p\{|\bm{\omega}|>t\}}[G(|\bm\omega|)\curl\bm\omega\times\bm\omega+G(|\bm\omega|)\nabla|\bm\omega||\bm\omega|]\cdot\bm\nu dS\\
\leq&
\int_{\p\O\bigcap \p\{|\bm{\omega}|>t\}} G(|\bm{\omega}|)\sum_{i,j=1}^3 \nu_i \omega_j \p_j \omega_i dS+t\int_{\{|\bm{\omega}|=t\}}|{\bm f}|dS\\
&-\int_{\{|\bm{\omega}|=t\}}(\bm\omega\times\bm\nu)(G'(|\bm\omega|)\nabla|\bm\omega|\times\bm\omega)dS-t\int_{\{|\bm{\omega}|=t\}}G(|\bm\omega|)|\nabla|\bm\omega||dS.
\endaligned
\end{equation}
Furthermore, direct computations show that
$$
\aligned
&-\int_{\{|\bm{\omega}|=t\}}(\bm\omega\times\bm\nu)(G'(|\bm\omega|)\nabla|\bm\omega|\times\bm\omega)dS-t\int_{\{|\bm{\omega}|=t\}}G(|\bm\omega|)|\nabla|\bm\omega||dS\\
=&-(p-1)t^{p-1}\int_{\{|\bm{\omega}|=t\}}|\nabla|\bm\omega||dS.
\endaligned
$$
Consequently,
\begin{equation}\label{1.12}
\aligned
&\int_{\p\{|\bm{\omega}|>t\}}[G(|\bm\omega|)\curl\bm\omega\times\bm\omega+G(|\bm\omega|)\nabla|\bm\omega|\bm\omega]\cdot\bm\nu dS\\
\leq&\int_{\p\O\bigcap \p\{|\bm{\omega}|>t\}} G(|\bm{\omega}|)\sum_{i,j=1}^3 \nu_i \omega_j \p_j \omega_i dS+t\int_{\{|\bm{\omega}|=t\}}|{\bm f}|dS\\
&-(p-1)t^{p-1}\int_{\{|\bm{\omega}|=t\}}|\nabla|\bm\omega||dS.
\endaligned
\end{equation}
Combining \eqref{1.9} and \eqref{1.12}, we obtain \eqref{1.5}.

Now we consider the following two cases separately: (A) $p\geq2$, and (B) $p< 2$.

In case (A), multiplying both sides of \eqref{1.5} by $t^{p-2}(\int_{\{|\bm{\omega}|=t\}}|\nabla|\bm\omega||dS)^{-1}$ and then integrating the resulting inequality from $t_0$ to $T$, we have
\begin{equation}\label{1.13}
\aligned
\int_{t0}^T(p-1)t^{2p-3}dt\leq&\int_{t0}^T(\int_{\{|\bm{\omega}|=t\}}|\nabla|\bm\omega||dS)^{-1}\int_{\{|\bm{\omega}|>t\}}\frac{1}{4(p-1)}|{\bm f}|^2dxdt\\
&+\int_{t0}^T(\int_{\{|\bm{\omega}|=t\}}|\nabla|\bm\omega||dS)^{-1}t^{p-1}\int_{\{|\bm{\omega}|=t\}}|{\bm f}|dSdt\\
&+\int_{t0}^T(\int_{\{|\bm{\omega}|=t\}}|\nabla|\bm\omega||dS)^{-1}t^{p-2}\int_{\p\O\bigcap \p\{|\bm{\omega}|>t\}} G(|\bm{\omega}|)
\sum_{i,j=1}^3 \nu_i \omega_j \p_j \omega_i dSdt.
\endaligned
\end{equation}

The estimates for the first and second integrals in the right-hand side of \eqref{1.13} can be achieved directly from \cite[(2.16) (2.17)]{xiangxf}, in which infers that
\begin{equation}\label{1.14}
\aligned
&\int_{t_0}^{T}(\int_{\{|\bm{\omega}|=t\}}|\nabla|\nabla \bm\omega||dS)^{-1}(\int_{\{|\bm{\omega}|>t\}}\frac{1}{4(p-1)}|{\bm f}|^2dx)dt \leq \frac{1}{4(p-1)}C(\O) \|{\bm f}\|_{L^{3,1}(\O)}^2,
\endaligned
\end{equation}
and
\begin{equation}\label{1.15}
\aligned
\int_{t_0}^{T}(\int_{\{|\bm{\omega}|=t\}}|\nabla|\nabla\bm\omega||dS)^{-1}(\int_{\{|\bm{\omega}|>t\}}t^{p-1}|{\bm f}|dx)dt \leq T^{p-1}C(\O) \|{\bm f}\|_{L^{3,1}(\O)},
\endaligned
\end{equation}
where the constant C depends only on the size of domain $\O$ and the regularity of the domain (the cone condition for $\O$).

Moreover, the last integral on the right-hand side of \eqref{1.13} is negative. Indeed, according to \cite[p.135-137]{Grisvard1985Elliptic}, it follows that $$
\sum_{i,j=1}^3 \nu_i \omega_j \p_j \omega_i
=2\bm{\omega}_{\tau}\cdot\nabla_{\tau}\omega_n
-\mathrm{Div}(\omega_n \bm{\omega}_{\tau} )+
\mathscr{B}(\bm{\omega}_{\tau},\bm{\omega}_{\tau})+
\mathrm{tr}\mathscr{B} \omega_n^2.
$$
Since $\omega_n=0$ on $\p\O$ and the domain $\O$ is convex, for any $x\in\p\O$, we have
$$\sum_{i,j=1}^3 \nu_i \omega_j \p_j \omega_i
=\mathscr{B}(\bm{\omega}_{\tau},\bm{\omega}_{\tau}) \leq 0.$$

Note that
$$
\aligned
t_0:=|\bm\omega|^*(|\O|/2)&\leq C(p, \O)\|\curl \u\|_{L^p(\O)}\\
&\leq C(p, \O)\|{\bm f}\|^{\frac{1}{p-1}}_{L^{3,1}(\O)},
\endaligned
$$
where the last inequality follows from \eqref{7.3}. Substituting \eqref{1.14}, \eqref{1.15} into \eqref{1.13}, we have
\begin{equation}\label{1.17}
CT^{2p-2}
\leq C(p, \O)\lVert {\bm f}\rVert ^2_{L^{3,1}(\O)}+C(\O)T^{p-1}\lVert {\bm f}\rVert _{L^{3,1}(\O)}.
\end{equation}
\iffalse
By \eqref{1.3} and the fact that $L^2(\O)$ is continuously embedded into Lorentz space $L_{3,1}(\O)$, we deduce that
\begin{equation}\label{1.18}
\aligned
\lVert\curl \u\rVert ^{2p-2}_{L^1(\O)} \leq \lVert\curl \u\rVert^{2p-2}_{L^p(\O)} \leq (\frac{1}{2})\lVert{\bm f}\rVert ^{4-{\frac{4}{p}}}_{L^2(\O)}\leq (\frac{1}{2})\lVert{\bm f}\rVert ^{4-{\frac{4}{p}}}_{L^{3,1}(\O)}.
\endaligned
\end{equation}
By the fact that $L_q(\O)$ is continuously embedded into $W_{1,p}(\O)$ when $p>\frac{1}{3}(\O)$, combined with \cite[Theorem 1.1]{Von2010Estimating}, we deduce that
\begin{equation}\label{1.19}
\lVert\u\rVert _{L^{3,1}(\O)}\leq \lVert \u \rVert_L^q(\O) \leq C' \lVert\u\rVert _{W^{1,p}(\O)} \leq C(\lVert\dv\u\rVert _{L^p(\O)}+\lVert\curl\u\rVert _{L^{p}(\O)}+\lVert\u\rVert _{L^p(\O)}),
\end{equation}
where constants $C$ and $C'$ depends on p and $\O$.
According to the system \eqref{1.1}, we obtain $\dv\u=\dv{\bm f}$ in $\O$. Combining \eqref{1.17} \eqref{1.18} \eqref{1.19} yields
$$
\aligned
C(\O)T^{2p-2}\leq& C(p,\O,\lVert{\bm f}\rVert _{L^{3,1}(\O)})+C(p,\O,\lVert\dv\u\rVert _{L^{p}(\O)},\lVert{\bm f}\rVert _{L^{3,1}(\O)})\\
&+T^{p-1}C(p,\O,\lVert\dv\u\rVert _{L^{p}(\O)},\lVert{\bm f}\rVert _{L^{3,1}(\O)}).
\endaligned
$$
\fi
Now letting $T\rightarrow \lVert\bm\omega\rVert _{L^{\infty}(\O)}$, we consequently obtain that for any $x\in\O$, there exists a constant $C$ depending on $p,\O$, such that
\begin{equation}\label{9.15}
\lVert\curl\u\rVert _{L^{\infty}(\O)}\leq C \|{\bm f}\|_{L^{3,1}(\O)}^{\frac{1}{p-1}}.
\end{equation}

In case (B), multiplying both sides of \eqref{1.5} by $(\int_{\{|\bm{\omega}|=t\}}|\nabla|\bm\omega||dS)^{-1}$ and then integrating the resulting inequality from $t_0$ to $T$, from \eqref{1.5} we obtain that
\begin{equation}
\aligned
\int_{t_0}^{T}(p-1)t^{p-1}dt\leq&\int_{t_0}^{T}(\int_{\{|\bm{\omega}|=t\}}|\nabla|\bm\omega||dS)^{-1}\int_{\{|\bm{\omega}|>t\}}\frac{1}{4(p-1)}T^{2-p}|{\bm f}|^2dxdt\\
&+\int_{t_0}^{T}(\int_{\{|\bm{\omega}|=t\}}|\nabla|\bm\omega||dS)^{-1}T\int_{\{|\bm{\omega}|=t\}}|{\bm f}|dSdt\\
&+\int_{t_0}^{T}(\int_{\{|\bm{\omega}|=t\}}|\nabla|\bm\omega||dS)^{-1}\int_{\p\O\bigcap \p\{|\bm{\omega}|>t\}} G(|\bm{\omega}|)
\sum_{i,j=1}^3 \nu_i \omega_j \p_j \omega_i dSdt.
\endaligned,
\end{equation}
Similarly, we have
$$CT^p\leq C(p,\O)T^{2-p}\|{\bm f}\|_{L^{3,1}(\O)}^2+C(\O)T\|{\bm f}\|_{L^{3,1}(\O)}+C(p,\O)\|{\bm f}\|_{L^{3,1}(\O)}^{\frac{p}{p-1}}.$$
Letting $T\rightarrow \lVert\bm\omega\rVert _{L^{\infty}(\O)}$, we consequently obtain that for any $x\in\O$, there exists a constant $C$ depending on $p,\O$, the estimate \eqref{9.15} is established.

Step 2. We remove the assumption \eqref{2.3}. We look
for a sequence $\{\O_m\}_{m\in \mathbb{N}}$
of bounded domains $\O_m\supset \O$ such that
$\O_m\in C^{\infty}, |\O_m\backslash \O|\to 0, \O_m\to\O$
with respect to the Hausdorff distance (see \cite{Verchota1984}). Let $\u_m$
be
the solutions to the system \eqref{1.1}
with the domain $\O$ replaced by $\O_m$. From step 1,
 we see that
$$
\|\curl\u_m\|_{L^{\infty}(\O_m)}\leq C \|{\bm f}\|_{L^{3,1}(\O_m)}^{\frac{1}{p-1}},
$$
where the constant $C$ depends on $p$ and $\O$ (the Lipschitz character of the domain).
Based on this fact, then by the $C^{1,\alpha}$ local regularity theory,
for any compact subset $\mathcal {K}$ of $\O$, we have
$$
\|\u_m\|_{C^{1,\alpha}(\mathcal {K})}\leq C \left(\mathcal {K}, \O\right),
$$
see \cite{Lieberman1988}.
By Arezela's theorem, we see that
$$
\curl\u_m\to \curl\hat{\u} \q\q a.e. \q \text{in }\O.
$$
For any $\psi\in W_t^p(\O,\dv 0)$,
we have
$$
\aligned
&\int_{\O}|\curl\u_m|^{p-2}\curl\u_m \cdot \curl\psi dx
=\int_{\O}{\bm f}\cdot\psi dx.
\endaligned
$$
Now by Lebesgue's dominated convergence theorem, we have
$$
\aligned
&\int_{\O}|\curl\u|^{p-2}\curl\u\cdot \curl\psi dx
=\int_{\O}{\bm f}\cdot\psi dx.
\endaligned
$$

Step 3. We remove the assumption \eqref{2.4}. We first extend the vector ${\bm f}$ to $\O_0$ with
$\O\subset\O_0$. There exists an extension of ${\bm f}$, denoted by $\tilde{{\bm f}}$,
such that $\dv\tilde{{\bm f}}=0$ in $\O_0$ and satisfies
$$
\|\tilde{{\bm f}}\|_{L^{3,1}(\O_0)}\leq C\|\tilde{{\bm f}}\|_{L^{3,1}(\O)}.
$$
There exists a sequence $\tilde{{\bm f}}_n\in C^{\infty}(\O_0)$ with $\dv\tilde{{\bm f}_n}=0$ and
$$
\tilde{{\bm f}}_n\to \tilde{{\bm f}} \q \text{in } L^{3,1}(\O_0),
$$
for the proof we may refer to \cite{Costabel1990A}.
The following proof is the same as step 4 in the proof of Theorem 1.1 in \cite{Cianchi2011}.
The proof is complete.
\end{proof}
\v0.2in

\noindent{\bf ACKNOWLEDGMENTS}
\v0.1in
The author wishes to thank her supervisor Professor Baojun Bian for his persistent guidance and constant encouragement. The research work was partly supported by the National Natural Science Foundation of China grant No. 11771335.

\vspace {0.5cm}

\begin {thebibliography}{DUMA}
\bibitem{Antontsev}
S. Antontsev, F. Miranda, L. Santos,
\newblock A class of electromagnetic p-curl systems: blow-up and finite time extinction,
\newblock {Nonlinear Anal.} 75 (2012) 3916-3929.

\bibitem{FAntontsev}
S. Antontsev, F. Miranda, L. Santos,
\newblock Blow-up and finite time extinction for p(x,t)-curl systems arising in electromagnetism,
\newblock {J. Math. Anal. Appl.} 440 (2016) 300-322.

\bibitem{Aramaki}
J. Aramaki,
\newblock On a Degenerate evolution system associated with the Bean critical-state for type II superconductors,
\newblock {Abstr. Appl. Anal.} (2015) 1-13.

\bibitem{Barrett1993Finite}
J.W. Barrett, W.B. Liu,
\newblock Finite element approximation of the p-laplacian,
\newblock {Mathematics of Computation.} 61 (204)(1994) 523-537.

\bibitem{Barrett}
J.W. Barrett, L. Prigozhin,
\newblock Bean’s critical-state model as the $p\to\infty$ limit of an evolutionary p-Laplacian equation,
\newblock{Nonlinear Anal.} 42 (2000) 977-993.

\bibitem{Bartsch}
T. Bartsch, J. Mederski,
\newblock Ground and bound state solutions of semilinear time-harmonic Maxwell equations in a bounded domain,
\newblock {Arch. Ration. Mech. Anal.} 215 (2014) 283-306.

\bibitem{Bean1964Magnetization}
C.P. Bean,
\newblock Magnetization of high-field superconductors,
\newblock {Rev. mod. phys.} 36 (1)(1964) 886-901.

\bibitem{Chapman2000A}
S.J. Chapman,
\newblock A hierarchy of models for type-II superconductors,
\newblock {Siam Review.} 42 (4)(2000) 555-598.

\bibitem{ChenJun}
J. Chen, X.B. Pan,
\newblock Quasilinear systems involving curl,
\newblock {Proc. Royal Soc. Edingburgh Sect. A Mathematics } 148 (2)(2018) 1-37.

\bibitem{Cianchi2014Global}
A. Cianchi, V.G. Maz'ya,
\newblock Global boundedness of the gradient for a class of nonlinear elliptic
systems,
\newblock {Arch. Ration. Mech. Anal.} 212 (2014) 129-177.

\bibitem{Cianchi2011}
A. Cianchi, V.G. Maz'ya,
\newblock Global Lipschitz regularity for a class of quasilinear elliptic
equations,
\newblock{Comm. Part. Differ. Eq.} 36 (2011) 100-133.

\bibitem{Costabel1990A}
M. Costabel,
\newblock A remark on the regularity of solutions of Maxwell's equations on
Lipschitz domains,
\newblock {Math. Method Appl Sci.} 12 (4)(1990) 365-368.

\bibitem{DeGennes}
P.G. DeGennes,
\newblock Superconducting of Metal and Alloys,
\newblock {Benjamin, New York,} 1966.

\bibitem{Dibenedetto2010Degenerate}
E. Dibenedetto,
\newblock Degenerate parabolic equations,
\newblock {Springer-Verlag.} 2010.

\bibitem{Grisvard1985Elliptic}
P. Grisvard,
\newblock Elliptic problems in nonsmooth domains,
\newblock {Pitman Advanced Pub. Program.} 1985.

\bibitem{Laforest}
M. Laforest,
\newblock The p-curlcurl: Spaces, traces, coercivity and a Helmholtz decomposition in $L_p$,
\newblock preprint.

\bibitem{Lieberman1988}
G. Lieberman,
\newblock Boundary regularity for solutions of degenerate elliptic equations,
\newblock {Nonlinear Anal.} 12 (11)(1988) 1203-1219.

\bibitem{Mederski}
J. Mederski,
\newblock Ground states of time-harmonic Maxwell equations in $R^3$ with vanishing permittivity,
\newblock {Arch. Ration. Mech. Anal.} 218 (2)(2015) 825-861.

\bibitem{Str}
M. Struwe,
\newblock Variational Methods,
\newblock 3rd edition, Springer-Verlag, Berlin, 2006.

\bibitem{Tang}
X. Tang, D. Qin,
\newblock Ground state solutions for semilinear time-harmonic Maxwell equations,
\newblock {J. Math. Phys.} 57 (4)(2016) 823-864.

\bibitem{Uralceva}
N.N. Ural'ceva,
\newblock Degenerate quasilinear elliptic systems,
\newblock {Zap. Nauchn. Sem. Leningrad. Otdel. Mat. Inst. Steklov. (LOMI)} 7 (1968) 184-222 (Russian).

\bibitem{Verchota1984}
G. Verchota,
\newblock Layer potentials and regularity for the Dirichlet problem for Laplace's equation in Lipschitz domains,
\newblock {J. Funct. Anal.} 59 (3)(1984) 572-611.

\bibitem{xiangxf}
X.F. Xiang,
\newblock $L^{\infty}$ estimate for a limiting form of Ginzburg-Landau systems in
convex domains,
\newblock {J. Math. Anal. Appl.} 438 (1)(2016) 328-338.

\bibitem{XiangM}
M.Q. Xiang, F.L. Wang, B.L. Zhang,
\newblock Existence and multiplicity of solutions for p(x)-curl systems arising in electromagnetism,
\newblock {J. Math. Anal. Appl.} 448 (2)(2016) 1600-1617.

\bibitem{Yin1998On}
H.M. Yin,
\newblock On a singular limit problem for nonlinear Maxwell's equations,
\newblock {J. Diff. Eqs.} 156 (2)(1998) 355-375.

\bibitem{Yin2001}
H.M. Yin,
\newblock On a p-Laplacian type of evolution system and applications to the Bean model in
the type-II superconductivity theory,
\newblock {Quart. Appl. Math.} 59 (1)(2001) 47-66.

\bibitem{Yin2002}
H.M. Yin, B.Q. Li, J. Zou,
\newblock A degenerate evolution system modeling Bean's critical-state type-II superconductors,
\newblock {Discrete Contin. Dyn. Syst. Ser. A.} 8 (3)(2002) 781-794.

\bibitem{Yin2006Regularity}
H.M. Yin,
\newblock Regularity of weak solution to an p-curl system,
\newblock {Differ. Integral Equ.} 19 (4)(2006) 361-368.

\end{thebibliography}

\end {document}